\newcommand{\Int}{\mathop{\mathrm{Int}}}
\def\bord{\partial}
\let\bydef\emph
\let\implies\Longrightarrow
\newtheorem{theo}{Theorem}[section]
\newtheorem{corol}[theo]{Corollary}
\newtheorem{prop}[theo]{Proposition}
\newtheorem{lem}[theo]{Lemma}
\theoremstyle{definition}
\newtheorem*{defi}{Definition}
\newtheorem*{claim}{Claim}
\def\rr{\mathbf{R}}
\def\zz{\mathbf{Z}}
\def\nn{\mathbf{N}}
\def\hh{\mathbf{H}}
\def\calF{\mathcal{F}}
\def\calS{\mathcal{S}}
\def\calT{\mathcal{T}}
\def\calV{\mathcal{V}}
\def\oo{\mathcal{O}}
\title{A structure theorem for irreducible open graph 3-manifolds}
\author{Sylvain Maillot}
\begin{document}

\maketitle

\begin{abstract}
Graph manifolds are a class of compact, orientable 3-manifolds introduced in 1967 by Waldhausen~\cite{wald:klasse} as a generalization of Seifert fibered 3-manifolds. From the point of view of Thurston's geometrization program, graph manifolds are exactly the compact, orientable 3-manifolds without any hyperbolic piece in their geometric decomposition.

In this article we consider a generalization of the notion of graph manifold that includes some noncompact 3-manifolds. We prove a structure theorem for irreducible open graph manifolds in the form of a canonical `reduced' decomposition along embedded, incompressible 2-tori.
\end{abstract}

\section{Introduction}
In 1967, F. Waldhausen~\cite{wald:klasse} introduced a class of compact, orientable 3-manifolds which he called graph manifolds (`Graphenmannigfaltigkeiten'). These are the compact, orientable 3-manifolds that can be built by gluing together pieces which are $S^1$-bundles over surfaces. In the mid 1970s, W.~Thurston proposed his geometrization conjecture, according to which every compact 3-manifold can be decomposed into pieces possessing a geometric structure modelled on one of the eight geometries $S^3$, $\rr^3$, $\hh^3$ $S^2\times\rr$, $\hh^2\times\rr$, $\widetilde{\mathrm{SL}_2(\rr)}$, $\mathrm{Nil}$ and $\mathrm{Sol}$. From this viewpoint, graph manifolds are exactly the compact, orientable 3-manifolds none of whose pieces are hyperbolic (i.e. modelled on $\hh^3$.) Thurston's conjecture was proven by Perelman in the early aughts~\cite{Per1,Per3,Per2}, see also~\cite{b3mp:book,Cao-Zhu,Kle-Lot,Mor-Tia}. In his proof, graph manifolds appear in the so-called `collapsing case', i.e. roughly speaking, as 3-manifolds admitting sequences of Riemannian metrics with injectivity radius going everywhere to zero and controlled curvature.

From the point of view of Riemannian geometry it is natural to consider open 3-manifolds (i.e. noncompact without boundary) which admit collapsing sequences of complete Riemannian metrics. This motivates the present article.

To discuss our results we give the definition of our generalization of graph manifolds to possibly noncompact 3-manifolds. In this paper all 3-manifolds are smooth, connected, and orientable. 

Let $M$ be a 3-manifold, and $\calF$ be a locally finite collection of pairwise disjoint embedded surfaces in $M$. We denote by $M\setminus \calF$ the manifold obtained from $M$ by removing disjoint open tubular neighborhoods of all members of $\calF$. This manifold is called \bydef{$M$ split along $\calF$}. Its connected components are called \bydef{pieces}.

\begin{defi}
Let $M$ be a 3-manifold without boundary. A \bydef{graph structure} on $M$ is a pair $(\calT,\calS)$ where $\calT$ is a locally finite collection of pairwise disjoint embedded 2-tori in $M$, and $\calS$ is a collection of Seifert fibrations on the pieces of $M\setminus \calT$. If $M$ admits a graph structure, then $M$ is called a \bydef{graph manifold}.\footnote{Our definition of a graph structure is slightly different from Waldhausen, who insists that the pieces be circle bundles. However, since every Seifert manifold can be decomposed into circle bundles, our definition of a graph manifold coincides with his in the compact case.}
\end{defi}

In trying to classify these manifolds we encounter a first difficulty. Recall that a 3-manifold $M$ is called \bydef{irreducible} if every embedded 2-sphere in $M$ bounds a 3-ball, and is called \bydef{prime} if it is irreducible or diffeomorphic to $S^2\times S^1$. The classification of compact graph manifolds is based on the fact that every compact graph manifold is a connected sum of prime ones. This allows to reduce the classification to the irreducible case. However, this fails for open graph manifolds. Indeed, the manifold $M_1$ constructed in~\cite{maillot:examples} is an open graph 3-manifold which cannot be decomposed into prime factors along a locally finite collection of embedded 2-spheres. In this paper we shall only be concerned with \emph{irreducible} open graph manifolds.

To explain the second difficulty, recall that in the compact case, every irreducible graph manifold admits a graph structure whose tori are incompressible, i.e.~$\pi_1$-injective, and such that no two adjacent pieces can be merged to give a larger Seifert fibered piece. This allows to think of graph structures as geometric decompositions in the sense of Thurston. Thus we are led to the following definition.

\begin{defi}
A graph structure $(\calT,\calS)$ on a 3-manifold $M$ is \bydef{reduced} if the following requirements are met:
\begin{enumerate}
\item All members of $\calT$ are incompressible.
\item All Seifert pieces are maximal in the sense that there does not exist a graph structure $(\calT',\calS')$ on $M$ with $\calT'$ a proper subset of $ \calT$.
\end{enumerate}
\end{defi}

While every compact graph manifold has a reduced graph structure,\footnote{This is essentially due to Waldhausen, although his definition of a reduced graph structure is slightly different from ours.} this is not always true of open graph manifolds. Indeed, $\rr^3$ does not contain any incompressible torus, so a reduced graph structure on it would consist of an empty collection of tori and a single Seifert piece. However, the base orbifold in a putative Seifert fibration on $\rr^3$ would have to be a plane, and the only Seifert manifold with base orbifold a plane is $S^1\times\rr^2$. Hence $\rr^3$ is an open graph manifold that does not admit any reduced graph structure.

One could wonder whether $\rr^3$ is the only open graph manifold with this property, but it is not the case. The manifold $M_2$ constructed by Scott and Tucker~\cite{st:exotic} is a fake $S^1\times\rr^2$, i.e.~an irreducible open 3-manifold with infinite cyclic fundamental group which is not diffeomorphic to $S^1\times\rr^2$. Inspection of the construction shows that $M_2$ is a graph manifold. Again $M_2$ does not contain any incompressible torus, and it is not Seifert (if it were, its base orbifold would be a plane with at most one cone point, and again $M_2$ would be diffeomorphic to $S^1\times\rr^2$.) Variations on this construction provide infinitely many such examples.

Note that both $\rr^3$ and $M_2$ have exhaustions by solid tori. We show that this is the only obstruction to the existence of a reduced graph structure.
\begin{theo}\label{theo:main}
Let $M$ be an open, irreducible graph 3-manifold. If $M$ does not admit an exhaustion by solid tori, then $M$ has a reduced graph structure.
\end{theo}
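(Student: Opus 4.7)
The plan is to start with an arbitrary graph structure $(\calT_0,\calS_0)$ on $M$, which exists since $M$ is a graph manifold, and to modify it in two stages to produce a reduced graph structure. The no-exhaustion hypothesis will be invoked only in the first stage, to rule out an otherwise possible infinite regress.

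\emph{Stage 1 (incompressibility).} For each torus $T\in\calT_0$ that is compressible in $M$, irreducibility of $M$ yields a solid torus $V(T)\subset M$ with $\partial V(T)=T$, lying on the side of a compressing disk: compression gives a 2-sphere that bounds a ball by irreducibility, which reconstitutes a solid torus after undoing the compression. Partially order $\calV=\{V(T):T\in\calT_0\text{ compressible}\}$ by inclusion. Every ascending chain $V(T_1)\subsetneq V(T_2)\subsetneq\cdots$ in $\calV$ terminates: otherwise $U=\bigcup_i V(T_i)$ is open in $M$ with empty frontier, since any $p\in\partial U$ would have a small connected neighborhood meeting the tori $T_i=\partial V(T_i)\in\calT_0$ for infinitely many $i$, violating local finiteness. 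Hence $U=M$, and after slight inward isotopy of the $\partial V(T_i)$ one obtains an exhaustion of $M$ by solid tori, contradicting the hypothesis. Let $\{V_\alpha\}$ be the maximal elements of $\calV$; they are pairwise disjoint, since two solid tori in a connected $3$-manifold with disjoint torus boundaries are either disjoint or nested, and nesting would contradict maximality. Let $\calT_1$ be the set of incompressible tori of $\calT_0$. A piece of $M\setminus\calT_1$ is then a component $Q$ of $M\setminus\calT_0$ lying outside all $V_\alpha$, together with all $V_\alpha$ adjacent to $Q$ via $\partial V_\alpha$ (every compressible torus is contained in some $V_\alpha$, so no other gluings are produced by erasing compressible tori). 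Equip such a piece with a Seifert fibration by keeping the fibration $\calS_0|_Q$ and extending it across each attached $V_\alpha$; this extension exists whenever the outer fiber slope on $\partial V_\alpha$ is not meridional in $V_\alpha$. This yields a graph structure $(\calT_1,\calS_1)$ with all tori in $\calT_1$ incompressible in $M$.

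\emph{Stage 2 (maximality).} Call a torus $T\in\calT_1$ \emph{removable} if the two adjacent pieces in $M\setminus\calT_1$ admit Seifert fibrations combining into a single Seifert fibration on their union. Let $\calT\subset\calT_1$ consist of the non-removable tori, and let $\calS$ be a coherent choice of merged fibrations on the resulting pieces. Merging away from a given torus does not affect which Seifert fibrations extend across it, so any surviving torus in $\calT$ remains non-removable in $(\calT,\calS)$; this is the maximality condition. Incompressibility is inherited from $\calT_1$, so $(\calT,\calS)$ is reduced.

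\emph{Main obstacle.} The crux of the argument is the meridional case in Stage 1, i.e.\ when the outer fiber slope of $Q$ on some $\partial V_\alpha$ is the meridian of $V_\alpha$: then a regular fiber of $Q$ bounds a disk in $V_\alpha$, so the Seifert fibration of $Q$ does not extend across $V_\alpha$, and one must either choose an alternative Seifert fibration on $Q$ realizing a non-meridional slope on $\partial V_\alpha$ (exploiting the very restrictive topology of Seifert pieces whose boundary fiber becomes null-homotopic in a neighboring solid torus) or argue that such a configuration itself produces an exhaustion of $M$ by solid tori, contradicting the hypothesis. A secondary concern is the consistency of the merging in Stage 2 across a possibly infinite collection of removable tori, which is handled by local finiteness of $\calT_1$: every compact region is affected by only finitely many merges, reducing the construction to the standard compact case.
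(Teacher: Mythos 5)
Your overall two-stage strategy matches the paper's, but there is a genuine gap precisely where you flag the ``main obstacle'': the meridional case is not a side issue, it is the bulk of the proof of the incompressibility step, and you leave it unresolved. When the regular fiber of the adjacent piece is meridional on $\partial V$, the paper does not find an alternative fibration --- a thick piece has a unique Seifert fibration up to isotopy (Corollary~\ref{corol:unique}), so there is no alternative slope to choose. Instead it derives a contradiction by a substantial analysis: using ``admissible arcs'' in the base orbifold $\oo$ of the adjacent piece one builds embedded $2$-spheres $S(\alpha)$ (a fibered annulus capped off by two meridian disks of $V$) and shows successively that $|\oo|$ is planar, that no second solid torus is adjacent, that $\oo$ has at most one end, and hence that $|\oo|$ is a compact annulus or a half-infinite annulus; the compact case is then excluded by a van Kampen computation showing a singular fiber would become null-homotopic, and the one-ended case forces $M\cong S^1\times\rr^2$, contradicting the no-solid-torus-exhaustion hypothesis. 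Neither of the two alternatives you sketch is established, so Stage 1 as written does not terminate in a graph structure.

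Two further points. First, your claim that irreducibility makes every compressible $T\in\calT_0$ bound a solid torus is false: the ball bounded by the compressed sphere may contain the compression tube, in which case $T$ only bounds a compact submanifold contained in a $3$-ball; the paper's Lemma~\ref{lem:comp torus} is needed both to replace such a piece by a solid torus and, in the ascending-chain argument, to conclude $M\cong\rr^3$ when infinitely many $V(T_k)$ fail to be solid tori. Second, Stage 2 glosses over the thin pieces: ``removable'' is not well defined for pieces diffeomorphic to $T^2\times I$, $K^2\tilde\times I$ or $T^2\times[0,+\infty)$, which carry non-isotopic Seifert fibrations, and your key assertion that merging elsewhere does not change which fibrations extend across a given torus rests on the uniqueness of the fibration on thick noncompact pieces (Theorem~\ref{theo:unique}) together with Lemma~\ref{lem:thick} guaranteeing that merged pieces stay thick. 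The paper first eliminates all thin pieces (including infinite chains of $T^2\times I$ pieces, which form half-lines and are absorbed into a neighboring thick piece) before running the merging process; without this preliminary step your maximality argument does not go through.
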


In order to justify that Theorem~\ref{theo:main} provides some kind of `classification' of irreductible graph 3-manifolds that do not have exhaustions by solid tori, we prove a corresponding uniqueness theorem:
\begin{theo}\label{theo:uniqueness}
Let $M$ be an open, irreducible graph 3-manifold. If $(\calT_1,\calS_1)$ and $(\calT_2,\calS_2)$ are two reduced graph structures on $M$, then there is an ambient isotopy of $M$ which carries $\calT_1$ to $\calT_2$.
\end{theo}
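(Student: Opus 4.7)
The plan is to first isotope $\calT_1$ so that it becomes disjoint from $\calT_2$, then use the reducedness hypothesis on both structures to show that every torus of $\calT_1$ is parallel to a torus of $\calT_2$, and finally to assemble the local parallelism isotopies into one ambient isotopy of $M$.

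\emph{Step 1 (removing intersections).} Put $\calT_1\cup\calT_2$ in general position, so that $\calT_1\cap\calT_2$ is a locally finite family of circles. If a circle of intersection is inessential on one of the two tori containing it, an innermost-disk argument using the incompressibility of the other torus and the irreducibility of $M$ yields a compactly supported ambient isotopy removing that circle (the incompressibility of each $T\in\calT_i$ in $M$ and the absence of incompressible tori in a $3$-ball force the standard ball cobounded by the innermost disk pair to be clean of other tori). Carrying out these moves along a compact exhaustion $K_1\subset K_2\subset\cdots$ of $M$ and splicing them via the local finiteness of $\calT_1\cup\calT_2$ produces a proper ambient isotopy after which every remaining intersection circle is essential on both containing tori. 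Each piece $P$ of $M\setminus\calT_2$ then meets $\calT_1$ in a properly embedded family of essential annuli. By standard results on essential annuli in orientable Seifert fibered $3$-manifolds, together with the reducedness of $(\calT_2,\calS_2)$ (which excludes a piece $P$ being a $T^2\times I$ or other degenerate Seifert manifold), each such annulus is isotopic in $P$ to a vertical annulus, which can then be pushed along the fibers off $\bord P$, strictly lowering $|\calT_1\cap\calT_2|$. Iterating once more along the exhaustion, we obtain a proper ambient isotopy after which $\calT_1\cap\calT_2=\emptyset$.

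\emph{Step 2 (matching the families).} Now every $T\in\calT_1$ lies inside a single Seifert piece $P$ of $(\calT_2,\calS_2)$ as an incompressible torus. By the classification of incompressible tori in Seifert fibered $3$-manifolds, $T$ is either parallel in $P$ to one of its boundary tori, or isotopic within $P$ to a vertical torus. In the second case, the Seifert fibration of $P$ restricts coherently across $T$ to the two pieces $Q_1,Q_2$ of $M\setminus\calT_1$ adjacent to $T$, so that $Q_1\cup T\cup Q_2$ is Seifert fibered; hence $(\calT_1\setminus\{T\},\calS_1')$ is a graph structure on $M$ strictly smaller than $(\calT_1,\calS_1)$, contradicting its reducedness. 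Therefore every $T\in\calT_1$ is parallel to some $T'\in\calT_2$. By symmetry the same holds in reverse, and ruling out two parallel copies in the same $\calT_i$ (which would force a $T^2\times I$ Seifert piece, again contradicting reducedness) we obtain a canonical parallelism bijection $\calT_1\leftrightarrow\calT_2$.

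\emph{Step 3 (globalization).} For each matched pair $(T,T')$ the parallelism provides a $T^2\times I$ region between $T$ and $T'$ in which a compactly supported ambient isotopy carries $T$ to $T'$. Local finiteness of $\calT_1$ and $\calT_2$ lets us choose these regions to be pairwise disjoint; the product of all these local isotopies, extended by the identity, defines a proper ambient isotopy of $M$ carrying $\calT_1$ to $\calT_2$. The main technical obstacle throughout is precisely this globalization: the innermost-disk and annulus-straightening moves of Step~1 are classical in the compact case, but in the noncompact setting each such modification must be performed within a compact stage of the exhaustion while respecting what has already been aligned on earlier stages, and one must verify that the infinite composition of these compactly supported isotopies converges to a genuine proper ambient isotopy of the open manifold $M$.
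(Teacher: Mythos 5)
Your overall strategy (make $\calT_1$ and $\calT_2$ disjoint, then use reducedness to show each torus of $\calT_1$ is parallel to one of $\calT_2$, then assemble disjoint product-region isotopies) is the same as the paper's. However, Step~1 contains a genuine error in the mechanism for removing the \emph{essential} intersection circles. You claim that each annulus of $\calT_1$ properly embedded in a Seifert piece $P$ of $M\setminus\calT_2$ is essential, hence isotopic to a vertical annulus, and ``can then be pushed along the fibers off $\bord P$.'' This cannot work: an essential annulus is by definition not boundary-parallel, so it cannot be isotoped off $P$ at all; a vertical annulus with boundary on $\bord P$ stays put. Moreover, reducedness of $(\calT_2,\calS_2)$ does not exclude essential vertical annuli in its pieces --- every thick Seifert piece with boundary contains plenty of them. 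The correct logic runs in the opposite direction: one shows that \emph{not all} of the relevant annuli can be essential, because if the annuli on both sides of a torus of the decomposition were essential, they would be vertical, the intersection curve would be isotopic to a fiber of the piece on each side, and the two Seifert fibrations would match across that torus --- contradicting reducedness. Hence at least one of the annuli is \emph{boundary-parallel}, and it is that inessential annulus which an innermost argument pushes off, reducing the number of intersection curves. (The paper runs this argument on the annuli that $\calT_2$ traces on the two pieces of $M\setminus\calT_1$ adjacent to a torus $T\in\calT_1$, using reducedness of $\calT_1$; one must also treat separately the case where $T$ is adjacent to a single piece, and the thin pieces $K^2\tilde\times I$ where the Seifert fibration is not unique.)

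There is a second, smaller gap in Step~2: when $T\in\calT_1$ is vertical but not boundary-parallel in a piece $P$ of $M\setminus\calT_2$, you assert that the fibration of $P$ ``restricts coherently'' to the pieces $Q_1,Q_2$ of $M\setminus\calT_1$ adjacent to $T$. But $Q_i$ is in general strictly larger than $Q_i\cap P$, so you must show that the fibration induced on $Q_i\cap P$ extends to (equivalently, agrees up to isotopy with the fibration $\calS_1$ places on) all of $Q_i$. This requires the uniqueness up to isotopy of Seifert fibrations on thick pieces and an extension lemma (the paper's Lemma~\ref{lem:thick}), plus a separate treatment of the cases where $Q_i\cap P$ or $Q_i$ is thin ($T^2\times I$ or $K^2\tilde\times I$) and where $T$ is adjacent to only one piece. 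Your Step~3 and the convergence of the infinite composition of compactly supported isotopies are fine.
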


The paper is structured as follows. In Section~\ref{sec:prelim}, we gather definitions and results about possibly noncompact Seifert manifolds. Some of them are known, others are straightforward generalizations of results about compact Seifert manifolds; Particularly noteworthy are Theorem~\ref{theo:unique} and Corollary~\ref{corol:unique}, which generalize uniqueness theorems for Seifert fibrations to the noncompact setting. These results are potentially of independent interest.

Section~\ref{sec:removing} deals with the first half of the proof of Theorem~\ref{theo:main}: we explain how to modify an arbitrary graph structure in order to obtain one where all tori are incompressible. The second half of the proof is given in Section~\ref{sec:end}. Finally, Section~\ref{sec:uniqueness} contains the proof of Theorem~\ref{theo:uniqueness}.

\paragraph{Acknowledgments}

I would like to thank Juan Souto and Michel Boileau for useful conversations. This work was partially supported by Agence Nationale de la Recherche through Grant ANR-12-BS01-0004.

\section{Preliminaries}\label{sec:prelim}
In this section we assume known results about compact 2-orbifolds and Seifert 3-manifolds, for which we refer to~\cite{bmp,scott:geom,wald:klasse}, and show how to extend these results to a class of possibly noncompact 2-orbifolds and 3-manifolds, possibly with nonempty boundary. 

We denote by $I$ the interval $[0,1]$. We call open Moebius band (resp.~finite annulus, resp.~half-infinite annulus, resp.~bi-infinite annulus) a surface diffeomorphic to the interior of the Moebius band (resp.~$S^1\times I$, resp.~$S^1\times [0,+\infty)$, resp.~$S^1\times\rr$.) A finite annulus is sometimes just called an \bydef{annulus}.

All 2-orbifolds considered in this article will be connected and locally orientable. The underlying space of such an orbifold is a surface, and its singular points (if any) are cone points.

\begin{defi}
Let $M$ be a 3-manifold. A \bydef{Seifert fibration} on $M$ is a map $p:M\to\oo$ to a 2-orbifold $\oo$ such that $(M,p)$ is a $S^1$-bundle over $\oo$. If $M$ admits a Seifert fibration, it is called a \bydef{Seifert manifold}.
\end{defi}

Let $p:M\to\oo$ be a Seifert fibration. Then $M$ is closed (resp.~open) if and only if $\oo$ is. The map $p$ induces a bijection between the set of connected components of $\bord M$ and that of $\bord\oo$; this bijection takes compact components of $\bord M$ (diffeomorphic to $T^2$) to compact components of $\bord\oo$ (diffeomorphic to $S^1$), and noncompact components of $\bord M$ (diffeomorphic to $S^1\times\rr$) to compact components of $\bord\oo$ (diffeomorphic to $\rr$). The preimages of points of $\oo$ by $p$ are topological circles which are called \bydef{fibers}. The preimages of cone points are called \bydef{singular fibers}.)

Throughout the paper we will always work with Seifert manifolds that do not have any noncompact boundary component. This is because we are primarily interested in decompositions of open graph 3-manifolds into Seifert submanifolds with nonmatching Seifert fibrations, whereas gluing together Seifert 3-manifolds along open annuli always yields matching fibrations. Let $\oo$ be a locally orientable 2-orbifold that does not have any noncompact boundary component. We say that $\oo$ has \bydef{finite type} if there is a compact 2-orbifold $\hat\oo$ such that $\oo$ is diffeomorphic to $\hat\oo \setminus X$ where $X$ is a (possibly empty) union of components of $\bord \hat\oo$. A 2-orbifold $\oo$ has finite type if and only the surface $|\oo|$ has finite type and $\oo$ has finitely many cone points.

\begin{prop}\label{prop:thin}
Let $p:M\to\oo$ be a Seifert fibration. Assume that $M$ is not closed, and $\bord M$ does not have any noncompact component. Then the following properties are equivalent:
\begin{enumerate}
\item The manifold $M$ has virtually abelian fundamental group;
\item The orbifold $\oo$ has virtually abelian fundamental group;
\item The orbifold $\oo$ has finite type and the Euler characteristic of its compactification $\hat\oo$ is nonnegative;
\item The orbifold $\oo$ is a disk (resp.~a plane) with at most one cone point, a disk (resp.~a plane) with exactly two cone points of order $2$, or a Moebius band (resp.~an open Moebius band, resp.~a (finite, half-infinite or bi-infinite) annulus) without cone point;
\item The manifold $M$ is diffeomorphic to $S^1\times D^2$, $T^2\times I$, $K^2\tilde{\times} I$, $S^1\times\rr^2$, $T^2\times\rr$, $K^2\tilde{\times} \rr$, or $T^2\times [0,+\infty)$.
\end{enumerate}
\end{prop}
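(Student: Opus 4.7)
My plan is to establish the cyclic chain of implications (v) $\implies$ (i) $\implies$ (ii) $\implies$ (iii) $\implies$ (iv) $\implies$ (v). The implication (v) $\implies$ (i) is a direct check: each of the seven listed manifolds has fundamental group equal to $\zz$, $\zz^2$, or the Klein bottle group, all of which are virtually abelian. For (i) $\implies$ (ii), the homotopy long exact sequence of the orbifold circle bundle $S^1 \to M \to \oo$ exhibits $\pi_1(\oo)$ as a quotient of $\pi_1(M)$, and virtual abelianness is inherited by quotients.

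For (ii) $\implies$ (iii), I would first argue that $\oo$ must have finite type: if $\oo$ had infinite genus, infinitely many boundary circles, or infinitely many cone points, then $\pi_1(\oo)$ would contain free subgroups of arbitrarily large rank (obtained from loops around handles, boundary components, or singular points), contradicting the virtual abelianness assumption. Once $\oo$ is known to have finite type, deleting boundary circles from $\hat\oo$ does not change the orbifold fundamental group (one just removes open collars), so $\pi_1(\oo) \cong \pi_1(\hat\oo)$. The classical classification of compact $2$-orbifold groups then gives virtual abelianness iff $\chi(\hat\oo) \geq 0$, since orbifolds with negative Euler characteristic admit hyperbolic structures and their fundamental groups contain nonabelian free subgroups.

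The step (iii) $\implies$ (iv) is a finite enumeration. Since $M$ is not closed, $\oo$ is also not closed, so $\bord \hat\oo \neq \emptyset$; by the correspondence stated just before the proposition, every component of $\bord \hat\oo$ is a circle. The compact locally orientable $2$-orbifolds with boundary and $\chi \geq 0$ are: the disk (with $0$, $1$, or $2$ cone points, and in the latter case both of order $2$), the annulus (no cone points), and the Moebius band (no cone points). Choosing $\oo = \hat\oo \setminus X$ for $X$ a union of boundary circles of $\hat\oo$ recovers precisely the list in (iv).

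The step (iv) $\implies$ (v) is the main obstacle, as it requires identifying the total space of an arbitrary Seifert fibration over each base in (iv). In the compact-base cases (disk, annulus, Moebius band), this reduces to the classical recognition of the total spaces as $S^1 \times D^2$, $T^2 \times I$, or $K^2 \tilde\times I$; in particular, both the Moebius band and the disk with two cone points of order $2$ yield $K^2 \tilde\times I$, since the latter carries a natural Seifert fibration over the disk with two exceptional fibers of multiplicity $2$. For the noncompact bases, I would exhaust $\oo$ by compact suborbifolds of the type just treated and identify $M$ as the direct limit of the corresponding Seifert submanifolds, which assembles into the stated (possibly twisted) product.
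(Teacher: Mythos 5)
Your proposal is correct and follows essentially the same route as the paper: the same cyclic chain (i) $\implies$ (ii) $\implies$ (iii) $\implies$ (iv) $\implies$ (v) $\implies$ (i), with the same arguments at each step (surjectivity of $p_*$, free subgroups obstructing virtual abelianness, the classification of compact $2$-orbifolds of nonnegative Euler characteristic, and a case-by-case identification of the total spaces). Your extra detail on handling the noncompact bases by exhaustion in (iv) $\implies$ (v) is a reasonable way to fill in what the paper leaves implicit.
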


\begin{proof}
The implication (i) $\implies$ (ii) follows from the fact that the induced homomorphism $p_*:\pi_1M\to\pi_1\oo$ is surjective.

For (ii) $\implies$ (iii) we argue that if $\oo$ has infinite type or $\chi(\hat\oo)<0$, then $\pi_1\oo$ contains a nonabelian free group, hence cannot be virtually abelian.

The implication (iii) $\implies$ (iv) follows from the classification of compact $2$-orbifolds. 

For (iv) $\implies$ (v) we simply do a case by case analysis: if $\oo$ is...

\paragraph{A disk with at most one cone point}
Then $M$ is diffeomorphic to $S^1\times D^2$.

\paragraph{A plane with at most one cone point}
Then $M$ is diffeomorphic to $S^1\times\rr^2$.

\paragraph{A disk with exactly two cone points of order $2$}
Then $M$ is diffeomorphic to $K^2\tilde\times I$.

\paragraph{A plane with exactly two cone points of order $2$}
Then $M$ is diffeomorphic to $K^2\tilde\times \rr$.

\paragraph{A Moebius band with no cone point}
Then $M$ is diffeomorphic to $K^2\tilde\times I$.

\paragraph{An open Moebius band with no cone point}
Then $M$ is diffeomorphic to $K^2\tilde\times \rr$.

\paragraph{A finite annulus without cone point}
Then $M$ is diffeomorphic to $T^2\times I$.

\paragraph{A half-infinite annulus without cone point}
Then $M$ is diffeomorphic to $T^2\times [0,+\infty)$.

\paragraph{A bi-infinite annulus without cone point}
Then $M$ is diffeomorphic to $T^2\times \rr$.

Finally (v) $\implies$ (i) is a simple fundamental group computation: every 3-manifold in the list of (v) has fundamental group isomorphic to $\zz$, $\zz^2$, or $\pi_1K^2$, hence is virtually abelian.
\end{proof}

We note that by characterization~(i) or~(v), this property depends only on the 3-manifold $M$, not the choice of Seifert fibration on it. Hence the following definition makes sense:

\begin{defi}
Let $M$ be a nonclosed Seifert manifold that does not have any noncompact boundary component. We say that $M$ is \bydef{thin} if the properties in Proposition~\ref{prop:thin} are satisfied. Otherwise it is called \bydef{thick}.
\end{defi}

In the sequel, $M$ is a nonclosed Seifert manifold that does not have any noncompact boundary component. If $p:M\to\oo$ is a Seifert fibration, we say that a subset $X$ of $M$ is \bydef{saturated} if it is a union of fibers. We use the terms \bydef{fibered torus} and \bydef{fibered annulus} for a saturated torus and a saturated annulus, respectively. A diffeomorphism between two Seifert manifolds is \bydef{fiber preserving} (with respect to some choice of Seifert fibrations) if it sends fibers to fibers.

The following three results are classical in the case of compact Seifert manifolds, and inspection of the proofs show that they extend readily to the noncompact case.

\begin{lem}\label{lem:vert}
Let $p:M\to\oo$ be a Seifert fibration. Let $T$ be an incompressible torus in $M$. Then $T$ is isotopic to a fibered torus. 
\end{lem}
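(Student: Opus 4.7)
The plan is to reduce the statement to the classical compact case, which is invoked (rather than reproved) by the author just before the lemma. Since $T$ is compact, this reduction is available.

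First I would use the fact that the image $p(T) \subset \oo$ is compact to choose a compact, connected sub-2-orbifold $\oo_0 \subset \oo$ whose interior contains $p(T)$ and whose frontier in $\oo$ consists of finitely many smooth circles missing the (isolated) cone points. Such an $\oo_0$ exists because the underlying surface $|\oo|$ admits an exhaustion by compact subsurfaces with smooth boundary, and the frontier can be perturbed so as to avoid the finitely many cone points lying in a compact neighborhood of $p(T)$. Setting $M_0 := p^{-1}(\oo_0)$, the restriction $p|_{M_0} \colon M_0 \to \oo_0$ is a Seifert fibration whose total space is compact and contains $T$ in its interior. Using the standing assumption that $\bord M$ has no noncompact component, every component of $\bord \oo$ is a compact circle, so every component of $\bord M_0$ is a fibered torus. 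Thus $M_0$ is a compact saturated submanifold of $M$ bounded by fibered tori and containing $T$.

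Next I would observe that $T$ is incompressible in $M_0$: any compressing disk for $T$ in $M_0$ would be a compressing disk for $T$ in $M$, contradicting the hypothesis. Applying the classical (compact) version of the lemma to the pair $(M_0, T)$ then yields an ambient isotopy of $M_0$, supported in the interior of $M_0$, that carries $T$ to a fibered torus $T'$ of $M_0$. Extending this isotopy by the identity on $M \setminus \Int M_0$ produces an ambient isotopy of $M$ sending $T$ to $T'$, and $T'$ is saturated with respect to $p$.

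The only subtle point is the compact statement itself, where one should rule out the possibility that $T$ is isotopic merely to a horizontal, as opposed to vertical, torus. This is the standard kind of issue resolved by a case-by-case analysis using Proposition~\ref{prop:thin}: a compact Seifert manifold containing a horizontal incompressible torus is very special (essentially thin), and in all such cases a horizontal torus can also be realized as a vertical one. Once this point of the compact theory is accepted, the reduction above goes through without further difficulty.
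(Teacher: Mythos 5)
Your argument is correct, but it is organized differently from the paper's. The paper gives no written proof of Lemma~\ref{lem:vert}: it simply asserts that the classical compact proofs ``extend readily to the noncompact case,'' i.e.\ it invites the reader to re-run Waldhausen's vertical/horizontal argument with a noncompact base. You instead use the compact statement as a black box, by exhausting the base and pulling back a compact, connected, saturated suborbifold $\oo_0\supset p(T)$ whose preimage $M_0=p^{-1}(\oo_0)$ is a compact saturated submanifold with fibered torus boundary containing $T$; incompressibility passes from $M$ to $M_0$ because $\pi_1T\to\pi_1M$ factors through $\pi_1M_0$. This reduction is cleaner and more checkable than ``inspect the classical proof,'' and it is exactly the kind of compact-core argument the author uses elsewhere (e.g.\ in the proof of Theorem~\ref{theo:unique}). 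The one place where your justification is imprecise is the final paragraph on horizontal tori: it is not true in general that a horizontal incompressible torus in a compact Seifert manifold can be realized as a vertical one (in $T^3$ with a product fibration, a horizontal torus and a vertical torus lie in different homology classes). What saves you is that this issue only arises for \emph{closed} total spaces: since $M$ is nonclosed by the standing convention, $M_0$ always has nonempty boundary (either $M$ itself is compact with boundary, or $\oo_0\neq\oo$ and some frontier circle contributes a boundary torus), and a closed horizontal surface cannot exist over a base orbifold with nonempty boundary, as the projection restricted to a horizontal surface is a branched covering of the base. So in $M_0$ every incompressible torus is either boundary-parallel (hence isotopic to a fibered boundary torus) or essential and necessarily vertical, and your reduction goes through. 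I would rewrite that last paragraph accordingly rather than appeal to a case analysis via Proposition~\ref{prop:thin}.
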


\begin{lem}\label{lem:ann vert}
Let $p:M\to\oo$ be a Seifert fibration. Let $A$ be an essential annulus in $M$.
\begin{enumerate}
\item If $M$ is thick, then $A$ is isotopic to some fibered annulus.
\item If $M$ is diffeomorphic to $K^2\tilde{\times} I$, then there exists a Seifert fibration on $M$ such that $A$ is isotopic to some fibered annulus.
\end{enumerate}
\end{lem}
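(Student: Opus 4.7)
The plan is to reduce both parts to the classical statement about essential annuli in compact Seifert fibered manifolds. Part~(ii) is purely a compact assertion, so I would simply invoke the classical result: $K^2\tilde{\times} I$ carries two distinct Seifert fibrations---one over a Moebius band, one over a disk with two cone points of order~$2$---and the compact theory shows that for any essential annulus in $K^2\tilde{\times} I$, at least one of these fibrations makes it isotopic to a vertical annulus. This is the well-known exceptional case in the compact classification.

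For part~(i), the idea is to localize $A$ inside a compact saturated Seifert submanifold of $M$ where the compact result directly applies. The first step is to choose a compact sub-orbifold $\oo' \subset \oo$ with $p(A) \subset \Int \oo'$, and to set $N := p^{-1}(\oo')$. Since $M$ is thick, Proposition~\ref{prop:thin} excludes $\oo$ from the list in clause~(iv), so $\oo$ has enough complexity that $\oo'$ may be chosen with negative orbifold Euler characteristic; then $N$ is itself a compact thick Seifert manifold containing $A$, on which the classical compact theorem is available.

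The main obstacle is to verify that $A$ remains \emph{essential} in $N$. Incompressibility in $N$ is immediate from incompressibility in $M$, since any compressing disk in $N$ is also a compressing disk in $M$. Boundary-parallelism is more delicate: $A$ could cobound a region $R \cong A \times I$ in $N$ with an annulus $A' \subset \bord N$ lying on one of the interior fibered tori $T_i = p^{-1}(\gamma_i)$ of $\bord N \setminus \bord M$, rather than in $\bord M \cap \bord N$. However, $R$ is compact, so enlarging $\oo'$ to contain $p(R)$ in its interior and re-defining $N$ accordingly eliminates this particular parallelism; at most finitely many such enlargements are needed, and each preserves the thickness of $N$.

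Once $A$ is essential in the compact thick Seifert manifold $N$, the classical compact version of the lemma supplies an isotopy, supported inside $N$, carrying $A$ to a fibered annulus $A^*$ of the restricted fibration $p|_N$. Extending this isotopy by the identity outside $N$ yields an ambient isotopy of $M$ taking $A$ to $A^*$, which is fibered for $p$; this proves~(i).
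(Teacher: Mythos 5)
Your proposal is correct in substance, and it is worth noting that the paper itself offers no proof of this lemma: it simply states that the result is classical for compact Seifert manifolds and that ``inspection of the proofs shows they extend readily to the noncompact case.'' Your localization argument --- choosing a compact saturated $N=p^{-1}(\oo')$ with $\chi^{\mathrm{orb}}(\oo')<0$ containing $A$, applying the compact theorem there, and extending the isotopy by the identity --- is a legitimate way to make that assertion precise, and part~(ii) is indeed just the standard exceptional case for $K^2\tilde\times I$, which is already compact.

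One step deserves comment. Your treatment of boundary-parallelism via repeated enlargement of $\oo'$ is not justified as written: each enlargement could in principle create a new parallelism region, and you give no reason why the process terminates after finitely many steps. Fortunately the difficulty you are guarding against is vacuous, provided $\oo'$ is chosen so that $\oo'\cap\bord\oo$ consists of entire boundary circles of $\oo$ (possible since $\bord M$ has no noncompact components, so these circles are compact). Then the components of $\bord N$ are, on the one hand, full tori of $\bord M$ and, on the other hand, the new interior tori $T_i=p^{-1}(\gamma_i)$, which are disjoint from $\bord M$. A parallelism region $R\cong A\times I$ exhibiting $\bord$-parallelism of $A$ in $N$ has $(\bord A)\times I$ contained in $\bord N$, hence contained in the components of $\bord N$ meeting $\bord A$, which are tori of $\bord M$; so the parallel copy of $A$ lies in $\bord M\cap\bord N$ and $A$ would already be boundary-parallel in $M$, a contradiction. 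With this observation replacing the enlargement process, your argument is complete (modulo the routine point that the ambient isotopy in $N$ can be taken to be the identity near the tori $T_i$, so that it extends to $M$).
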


\begin{prop}\label{prop:extend}
Let $p:M\to\oo$ be a Seifert fibration. Let $N$ be a 3-manifold obtained by gluing a solid torus $V$ to $M$ along some boundary component $T$ of $M$. If the generic fiber of $T$ does not bound a meridian disk in $V$, then the Seifert fibration $p$ extends to some Seifert fibration on $N$. 
\end{prop}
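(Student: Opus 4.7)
The plan is to reduce the claim to an explicit construction of a compatible Seifert fibration on the solid torus $V$, then glue. Since $T$ is a compact component of $\bord M$, it is a torus and the Seifert fibration $p$ restricts to $T$ as a trivial circle fibration with generic fiber a simple closed curve $f\subset T$. Via the gluing map we identify $T$ with $\bord V$. Choose a meridian $\mu$ and a longitude $\lambda$ on $\bord V$, so that $H_1(V;\zz)$ is generated by $[\lambda]$ and killed by $[\mu]$. Write $[f]=a[\lambda]+b[\mu]\in H_1(\bord V;\zz)$, with $\gcd(a,b)=1$ since $f$ is simple. The assumption that $f$ does not bound a meridian disk in $V$ is equivalent to $[f]\ne 0$ in $H_1(V;\zz)$, i.e. $a\ne 0$.

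The next step is to produce a Seifert fibration $q\colon V\to\oo'$ whose generic fiber on $\bord V$ is (isotopic to) $f$. I would take $\oo'=D^2$ with a single cone point of order $|a|$ at the origin if $|a|>1$, and $\oo'=D^2$ with no cone point if $|a|=1$. In the second case $V=S^1\times D^2$ is fibered as the trivial circle bundle with the $S^1$-factor parallel to $f$. In the first case, pick integers $r,s$ with $as-br=\pm1$ and take the standard Seifert model on $S^1\times D^2$ whose circle action has a single exceptional orbit at $\{0\}\times\{0\}$ of multiplicity $|a|$ and whose generic fiber on the boundary has slope $a/b$ in the basis $(\lambda,\mu)$. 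This is the well-known local model for a singular fiber in a Seifert manifold, and the condition $\gcd(a,b)=1$ ensures it exists.

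Finally, because both Seifert fibrations (the one on $M$ and the one just constructed on $V$) restrict to $T=\bord V$ as a circle fibration whose fibers are isotopic to $f$, an ambient isotopy of $T$ supported in a collar can be used to make the two fibrations coincide pointwise on $T$. After this adjustment the fibrations glue to give a Seifert fibration on $N=M\cup_T V$ extending $p$.

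The only real obstacle is the construction of the local Seifert model on $V$ with the prescribed boundary slope when $|a|>1$; this is entirely standard, and the hypothesis $a\ne 0$ is exactly what allows the slope $a/b$ to be realized by a (possibly singular) fiber of a Seifert fibration on a solid torus.
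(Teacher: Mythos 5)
Your argument is the standard classical one: identify the fiber class $a[\lambda]+b[\mu]$ on $\bord V$, note that the hypothesis is exactly $a\neq 0$, realize that slope as the boundary fiber of a fibered solid torus with central fiber of multiplicity $|a|$, and glue after an isotopy matching the two boundary fibrations. The paper gives no proof of this proposition (it is listed among results that are classical in the compact case and extend readily, the construction here being local near $T\cup V$ anyway), and your proof is correct and is essentially that classical argument.
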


\begin{theo}\label{theo:unique}
Let $p_1:M_1\to \oo_1$ and $p_2:M_2\to \oo_2$ be Seifert fibrations. Assume that $M_1$ is thick. Let $\phi:M_1\to M_2$ be a diffeomorphism. Then there is a fiber preserving diffeomorphism $\psi:M_1\to M_2$ which is isotopic to $\phi$.
\end{theo}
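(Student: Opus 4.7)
The plan is to reduce to the case $M_1=M_2=M$, $\phi=\mathrm{id}_M$, then build the fiber preserving diffeomorphism inductively by applying the compact uniqueness theorem on a common saturated exhaustion. Define $q_2:=p_2\circ\phi:M_1\to\oo_2$, a Seifert fibration on $M_1$. It is enough to produce an ambient isotopy of $M_1$ from $\mathrm{id}$ to a diffeomorphism $\psi'$ carrying $p_1$-fibers to $q_2$-fibers, for then $\psi:=\phi\circ\psi'$ will be fiber preserving and isotopic to $\phi$. If $M_1$ is compact, the claim is the classical Waldhausen uniqueness theorem for thick compact irreducible Seifert manifolds with incompressible boundary, so we henceforth assume $M_1$ is noncompact.

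The first main step is to build an exhaustion $W_1\subset W_2\subset\cdots$ of $M_1$ by compact submanifolds saturated for both $p_1$ and $q_2$, with $\bord W_n$ a finite union of incompressible tori vertical for both fibrations. Exhaust the base $\oo_1$ by compact essential sub-orbifolds $O_n$ with $\pi_1$-injective boundary circles, chosen so that each $p_1^{-1}(O_n)$ is thick; this is possible by Proposition~\ref{prop:thin} since $M_1$ is thick. Let $V_n:=p_1^{-1}(O_n)$: compact, $p_1$-saturated, with incompressible $p_1$-vertical boundary tori. By Lemma~\ref{lem:vert} applied to $q_2$, each boundary torus of $V_n$ is ambient isotopic in $M_1$ to a $q_2$-vertical torus. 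Choosing the $O_n$ sufficiently spread out in $\oo_1$, the corresponding isotopies may be realized in pairwise disjoint compact regions of $M_1$ and concatenated into a single ambient isotopy $H_t$ of $M_1$. Setting $W_n:=H_1(V_n)$ and $\tilde p_1:=p_1\circ H_1^{-1}$, we obtain a Seifert fibration $\tilde p_1$ isotopic to $p_1$ and an exhaustion whose boundary tori are vertical for both $\tilde p_1$ and $q_2$. Since a compact submanifold of a Seifert manifold with saturated boundary is itself saturated (a fiber cannot cross a saturated surface), each $W_n$ is saturated for both fibrations.

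The second main step constructs the ambient isotopy by induction. Apply the compact uniqueness theorem to $\tilde p_1$ and $q_2$ on the thick compact manifold $W_1$ to produce an ambient isotopy of $W_1$ from $\mathrm{id}$ to a fiber preserving diffeomorphism $\psi_1$. At stage $n\to n+1$, each connected component $X$ of the compact region $W_{n+1}\setminus\mathrm{int}(W_n)$ is a compact Seifert submanifold on which $\tilde p_1$ and $q_2$ have saturated boundary, and on the inner boundary $X\cap\bord W_n$ they are already matched by $\psi_n$. When $X$ is thick, the compact relative uniqueness theorem extends $\psi_n$ to $X$ by an ambient isotopy supported in $X$. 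When $X\cong T^2\times I$, the Seifert fibration on $X$ is determined up to boundary-fixing isotopy by its vertical foliation on either boundary component, so matching on one end automatically produces matching on the other. When $X\cong K^2\tilde\times I$, Lemma~\ref{lem:ann vert} provides a Seifert fibration for $q_2|_X$ in which the already matched boundary annulus is vertical, and the thick-case argument then applies. Since the step-$n$ isotopy is supported in $W_{n+1}\setminus\mathrm{int}(W_{n-1})$, these supports escape every compact set, and their infinite composition defines a well-defined ambient isotopy of $M_1$ from $\mathrm{id}$ to the desired $\psi'$.

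The main obstacle is the construction of the bi-saturated exhaustion: realizing the isotopies furnished by Lemma~\ref{lem:vert} simultaneously on all $\bord V_n$ without mutual interference, and without destroying $p_1$-verticality (which is achieved by transporting the fibration along with the isotopy and passing to $\tilde p_1$). A secondary technical point is the inductive extension across the thin pieces $T^2\times I$ and $K^2\tilde\times I$ that may occur between consecutive $W_n$; these are precisely the cases handled by Proposition~\ref{prop:thin} and Lemma~\ref{lem:ann vert}, which allow one to reduce the extension to a Seifert fibration determined by its boundary behavior.
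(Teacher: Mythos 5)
Your proposal is correct and follows essentially the same route as the paper: exhaust $M_1$ by compact, saturated, thick submanifolds with incompressible boundary, use Lemma~\ref{lem:vert} to isotope the boundary tori to vertical position for the second fibration, apply the compact (Waldhausen) uniqueness theorem inductively, and assemble the resulting isotopies using the local finiteness of their supports. The only differences are organizational (reducing to $\phi=\mathrm{id}$ via $q_2=p_2\circ\phi$, verticalizing all exhaustion boundaries at once rather than shell by shell, and extending over the shells $W_{n+1}\setminus\mathrm{int}(W_n)$ rather than re-applying the compact theorem to each $U_n$ rel $U_{n-1}$), and these do not change the substance of the argument.
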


\begin{proof}
If $M_1$ (and therefore $M_2$) is compact, this is due to Waldhausen~\cite[Satz 2.9]{wald:klasse}.

Therefore we assume that $M_1$ is noncompact. By characterization~(iii) of thin manifolds, we see that $M_1$ has an exhaustion by compact thick submanifolds which are saturated for $p_1$ and all of whose boundary components are incompressible in $M_1$. Let $\{U_n\}_{n\in\nn}$ be such an exhaustion. Without loss of generality we assume that for every $n$, each component of $U_{n+1}\setminus \Int{U_n}$ is either thick or diffeomorphic to $T^2\times I$.

Let $T_{0,1},\ldots,T_{0,p}$ be the boundary components of $U_0$. For each $i$ we apply Lemma~\ref{lem:vert} and obtain an isotopy between $\phi(T_{0,i})$ and some fibered torus $T'_{0,i}$ in $M_2$. We then extend these isotopies to an isotopy between $\phi$ and some diffeomorphism $\phi'_0:M_1\to M_2$ such that $\phi'_0(U_0)$ is the saturated (compact) submanifold of $M_2$ bounded by the $T'_{0,i}$'s. Applying the compact case of Theorem~\ref{theo:unique} to $U_0$, we modify $\phi'_0$ by an isotopy to get a diffeomorphism $\phi_0$ with the same property and such that moreover the restriction of $\phi_0$ to $U_0$ sends fiber to fiber. We can ensure that these two isotopies have support in some small neighborhood $U'_0$ of $U_0$ contained in $U_1$.

We now work on $U_1$. Let $T_{1,1},\ldots,T_{1,p}$ be the boundary components of $U_1$. For each $i$ we apply Lemma~\ref{lem:vert} and obtain an isotopy between $\phi(T_{1,i})$ and some fibered torus $T'_{1,i}$ in $M_2$. We then extend these isotopies to an isotopy between $\phi_0$ and some diffeomorphism $\phi'_1:M_1\to M_2$ such that $\phi'_1(U_1)$ is the saturated submanifold of $M_2$ bounded by the $T'_{1,i}$'s. We choose such an isotopy with support in some neighborhood $U'_1$ of $U_1$ contained in $U_2$. Then by an isotopy again supported in $U'_1$ and constant on $U_0$ we obtain a diffeomorphism $\phi_1$ with the same property and whose restriction to $U_1$ is fiber preserving.

We iterate this construction, getting an infinite sequence of isotopies such that any compact subset of $M_1$ is in the support of only finitely many (actually, at most four) of them. Piecing together these isotopies we obtain an isotopy from $\phi$ to some fiber preserving diffeomorphism $\psi$.
\end{proof}

\begin{corol}\label{corol:unique}
Let $M$ be a thick Seifert manifold. Then there is only one Seifert fibration on $M$ up to isotopy.
\end{corol}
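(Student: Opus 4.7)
The plan is to derive this directly from Theorem~\ref{theo:unique} by taking $M_1 = M_2 = M$ and $\phi = \mathrm{id}_M$. Explicitly, given two Seifert fibrations $p_1:M\to\oo_1$ and $p_2:M\to\oo_2$ on the same thick Seifert manifold $M$, we apply Theorem~\ref{theo:unique} to the identity diffeomorphism $\mathrm{id}:M\to M$. This produces a diffeomorphism $\psi:M\to M$ which is isotopic to $\mathrm{id}_M$ and which is fiber preserving, in the sense that it sends each fiber of $p_1$ to a fiber of $p_2$.

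The ambient isotopy from $\mathrm{id}_M$ to $\psi$ then carries the decomposition of $M$ into $p_1$-fibers to the decomposition of $M$ into $p_2$-fibers. In particular, the collection of singular fibers of $p_1$ is carried to the collection of singular fibers of $p_2$, and the isotopy type of the fibration (as a foliation by circles with prescribed multiplicities at singular fibers) is preserved. This is what we mean by saying that $p_1$ and $p_2$ agree up to isotopy.

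There is no real obstacle here: the corollary is essentially a restatement of Theorem~\ref{theo:unique} in the case where the source and target coincide and the diffeomorphism between them is trivial. The only minor point worth mentioning is that the notion of two Seifert fibrations being ``the same up to isotopy'' should be interpreted in the natural way, namely as the existence of an ambient isotopy of $M$ carrying the fibers of one to the fibers of the other; this follows immediately from the existence of $\psi$ above together with the isotopy realizing $\psi \simeq \mathrm{id}_M$.
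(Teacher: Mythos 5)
The paper states this corollary without proof, and your derivation — applying Theorem~\ref{theo:unique} with $M_1=M_2=M$ and $\phi=\mathrm{id}_M$ to get a fiber-preserving $\psi$ isotopic to the identity, whose isotopy carries the $p_1$-fibration to the $p_2$-fibration — is exactly the intended argument. Your proposal is correct and matches the paper's (implicit) approach.
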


We end this section by a lemma which will be useful later.
\begin{lem}\label{lem:thick}
Let $Y$ be a nonclosed Seifert manifold. Let $X\subset Y$ be a thick Seifert manifold such that every component of $\bord X$ is incompressible in $Y$. Then $Y$ is thick and every Seifert fibration on $X$ extends to a Seifert fibration of $Y$.
\end{lem}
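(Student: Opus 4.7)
The plan has two parts. For thickness of $Y$, I argue by contradiction: if $Y$ were thin, then $\pi_1(Y)$ would be virtually abelian by characterization~(i) of Proposition~\ref{prop:thin}. Using the incompressibility of the tori in $\bord X$ (as subsets of $Y$), I show that $\pi_1(X)$ injects into $\pi_1(Y)$ via the standard disk-swap argument: a null-homotopic loop $\gamma \subset \Int X$ bounds a disk $D$ in $Y$, which can be made transverse to $\bord X$; an innermost intersection circle in $D$ bounds a subdisk whose boundary, being null-homotopic in $Y$, is null-homotopic in $\bord X$ by the incompressibility hypothesis, and this subdisk may be swapped with a disk in $\bord X$ and pushed off to reduce intersections. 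After finitely many swaps $D$ is contained in $X$, so $\gamma$ is null-homotopic in $X$. Since $X$ is thick, $\pi_1(X)$ is not virtually abelian, which contradicts its injection into the virtually abelian group $\pi_1(Y)$.

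For the extension of the fibration, let $p$ be a Seifert fibration on $X$ and let $q$ be any Seifert fibration on $Y$. By Lemma~\ref{lem:vert}, each component of $\bord X$ is isotopic in $Y$ to a $q$-fibered torus; since $\bord X$ is locally finite in $Y$, these isotopies can be carried out simultaneously with pairwise disjoint supports, so I may assume $\bord X$ is already $q$-fibered. Then $X$ is $q$-saturated: any $q$-fiber meeting $\bord X$ must lie entirely in $\bord X$, so every fiber that meets $\Int X$ stays in $\Int X$. Consequently $q$ restricts to a Seifert fibration $q|_X$ on $X$.

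Now Corollary~\ref{corol:unique} applied to the thick manifold $X$ gives that $q|_X$ and $p$ are isotopic, and Theorem~\ref{theo:unique} applied to $\mathrm{id}_X$ produces a fiber-preserving diffeomorphism $\psi: (X, q|_X) \to (X, p)$ together with an isotopy $\psi_t$ from $\mathrm{id}_X$ to $\psi$ through diffeomorphisms of $X$ (each of which preserves $\bord X$ setwise). I extend $\psi_t$ to an ambient isotopy $\tilde\psi_t$ of $Y$ supported in $X$ together with a collar neighborhood of $\bord X$ in $\overline{Y \setminus X}$, damping the boundary values smoothly to the identity at the far end of the collar. Then $\tilde q = q \circ \tilde\psi_1^{-1}$ is a Seifert fibration on $Y$ whose fibers within $X$ coincide with the $p$-fibers, providing the required extension. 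The main delicate point is the $\pi_1$-injectivity argument of the first paragraph; the remainder is a fairly direct assembly of Lemma~\ref{lem:vert}, Corollary~\ref{corol:unique}, and standard isotopy-extension techniques.
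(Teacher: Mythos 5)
Your proposal is correct and follows essentially the same route as the paper: $\pi_1$-injectivity from incompressibility of $\bord X$ gives thickness via characterization~(i), and the extension is obtained by fibering $\bord X$ with Lemma~\ref{lem:vert}, restricting an arbitrary fibration $q$ of $Y$ to the saturated submanifold $X$, and using Corollary~\ref{corol:unique} plus an ambient isotopy to match it with $p$. The only difference is that you spell out details the paper leaves implicit (the disk-swap argument for $\pi_1$-injectivity and the collar damping of the isotopy), which is fine.
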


\begin{proof}
It follows from incompressibility of $\bord X$ that $\pi_1X$ injects into $\pi_1Y$. Hence using characterization~(i), thickness of $X$ implies thickness of $Y$.

Let $p$ be a Seifert fibration on $X$. To construct an extension of $p$ to $Y$, we start with an arbitrary Seifert fibration $q$ on $Y$. Let $T_1,\ldots,T_n$ be the boundary components of $X$. By Lemma~\ref{lem:vert}, each $T_i$ is isotopic to some torus $T'_i$ that is a union of fibers of $q$. As in the proof of~\ref{theo:unique}, we can find an ambient isotopy of $Y$ which brings $T_i$ to $T'_i$ for every $i$. The reverse isotopy will transform the Seifert fibration $q$ into a Seifert fibration $q'$ of $Y$ for which $\bord X$, hence $X$, is saturated.

By Corollary~\ref{corol:unique}, the restriction of $q'$ to $X$ is isotopic (in $X$) to $p$. This isotopy can be extended to an ambient isotopy in $Y$, yielding a Seifert fibration on $Y$ whose restriction to $X$ is equal to $p$.
\end{proof}

\section{Removing compressible tori}\label{sec:removing}
The goal of this section is to prove the following proposition:
\begin{prop}\label{prop:incomp}
Let $M$ be an open, irreducible graph 3-manifold which does not admit any exhaustion by solid tori. Then $M$ has a graph structure $(\calT_0,\calS_0)$ such that every member of $\calT_0$ is incompressible.
\end{prop}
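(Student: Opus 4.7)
The strategy is to start with an arbitrary graph structure $(\calT,\calS)$ and iteratively delete compressible tori using Proposition~\ref{prop:extend}. First, for each compressible $T\in\calT$, the torus $T$ bounds a unique solid torus $V_T\subset M$: a compressing disk together with the irreducibility of $M$ produces a $2$-sphere bounding a $3$-ball, which reconstitutes a solid torus on one side of $T$, while the openness of $M$ rules out a solid torus on the other side (a two-sided solid-torus decomposition would glue into a closed lens space that, by connectedness, would have to be all of $M$). Partially order the compressible tori of $\calT$ by $T\preceq T'$ iff $V_T\subseteq V_{T'}$.

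For the main reduction, pick a compressible $T\in\calT$ maximal under $\preceq$ and let $P$ denote the piece of $M\setminus\calT$ adjacent to $T$ on the side opposite $V_T$. The key claim is that the generic fiber of $P$'s Seifert fibration on $T$ is not a meridian of $V_T$. Arguing by contradiction and using Proposition~\ref{prop:thin} to enumerate the possibilities for $P$: if $P$ is thin with two boundary components then $P=T^2\times I$, and $V_T\cup P$ is a strictly larger solid torus whose boundary torus lies in $\calT$, violating the maximality of $V_T$; if $P$ has only $T$ as boundary (for example $P\in\{S^1\times D^2,\ K^2\tilde{\times} I,\ T^2\times[0,\infty)\}$ when thin), then $V_T\cup P$ has no boundary and is clopen in $M$, so $V_T\cup P=M$ by connectedness, and inspection identifies $M$ as either closed (contradicting $M$ open) or a thin manifold admitting an exhaustion by solid tori, such as $S^1\times\rr^2$ (contradicting the hypothesis). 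Once the claim is in place, Proposition~\ref{prop:extend} extends $P$'s fibration across $V_T$; we then delete $T$ along with the finitely many tori of $\calT$ lying inside $V_T$ (finite by local finiteness) and refiber the merged piece accordingly, strictly reducing the number of compressible tori.

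The main obstacles are threefold. First, the subcase in which $P$ is thick with multiple boundary components and the fiber on $T$ equals the meridian is the central technical complication; here $V_T\cup P$ is generally not Seifert, and its resolution requires a finer analysis using the structure of Seifert fibered spaces together with the constraints imposed by the irreducibility and orientability of $M$ (in particular, ruling out torsion in the $\pi_1$ of the would-be merged piece, which would force reducibility of $M$). Second, a maximal element of $\preceq$ may fail to exist when $\calT$ is infinite; this is handled via the local finiteness of $\calT$, which prevents the boundaries of an infinite ascending chain of solid tori from accumulating, together with the hypothesis that $M$ is not exhausted by solid tori (which rules out any such chain realizing $\bigcup_n V_{T_n}=M$). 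Third, reductions for possibly infinitely many compressible tori must be organized coherently, which I would do by exhausting $M$ by compact submanifolds whose boundary components lie in $\calT$ and performing local modifications within each, ensuring compatibility in the limit to obtain the desired graph structure $(\calT_0,\calS_0)$.
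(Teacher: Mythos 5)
Your proposal follows the same overall skeleton as the paper's proof (pass to maximal compact regions bounded by the compressible tori of $\calT$, extend Seifert fibrations across them via Proposition~\ref{prop:extend}, and isolate the case of a meridional fiber as the crux), but it has two genuine gaps. First, a compressible torus in an open irreducible 3-manifold need not bound a solid torus: compressing $T$ along a disk yields a sphere, which bounds a ball by irreducibility, but that ball may contain $T$ itself, in which case the compact region bounded by $T$ can be, for example, a nontrivial knot exterior. The paper invokes Lemma~\ref{lem:comp torus} (Lemma A.3.1 of~\cite{b3mp:book}), which gives exactly the needed dichotomy: the compact region $V(T)$ is either a solid torus or is contained in a 3-ball, and in the latter case it may be replaced by a solid torus without changing $M$. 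Your maximality argument also needs this dichotomy: for an infinite increasing chain $V(T_1)\subset V(T_2)\subset\cdots$, local finiteness forces the chain to exhaust $M$; if infinitely many $V(T_k)$ are solid tori this contradicts the hypothesis directly, and otherwise $M$ is exhausted by 3-balls, hence diffeomorphic to $\rr^3$, which \emph{does} admit a solid-torus exhaustion. Your version of this step silently assumes all the $V(T_k)$ are solid tori.

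Second, and more seriously, the case you yourself flag as ``the central technical complication'' --- a remaining solid torus $V$ adjacent to a thick piece $Y$ whose fiber on $\bord V$ is a meridian --- is the heart of the paper's proof, and you do not supply an argument for it; ``ruling out torsion in the $\pi_1$ of the would-be merged piece, which would force reducibility'' is a gesture, not a proof, and it is not the mechanism actually used. The paper's argument associates to each properly embedded arc $\alpha$ in the base orbifold $\oo$ with both endpoints on $p(\bord V)$ a 2-sphere $S(\alpha)$, obtained by capping the annulus $p^{-1}(\alpha)$ with meridian disks of $V$. Irreducibility of $M$ then successively forces $|\oo|$ to be planar, rules out a second adjacent solid torus, shows $\oo$ has at most one end, shows every other boundary component of $Y$ is incompressible and that there is at most one such component; this pins $|\oo|$ down to a compact annulus or a half-infinite annulus, and a final cone-point/van Kampen computation (in the compact case) or the conclusion $M\cong S^1\times\rr^2$ (in the one-ended case) produces the contradiction. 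Without some version of this analysis the proof is incomplete precisely where the hypothesis that $M$ has no exhaustion by solid tori must do its real work.
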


We shall use the following lemma~\cite[Lemma A.3.1]{b3mp:book}.

\begin{lem}\label{lem:comp torus}
Let $T$ be a torus embedded in $M$. If $T$ is compressible, then $T$ bounds a compact submanifold $V\subset M$. Moreover, if $V$ is not a solid torus, then  $V$ is contained in a submanifold $B$ of $M$ which is diffeomorphic to the 3-ball, and $V$ can be replaced by a solid torus without changing the diffeomorphism type of $M$.
\end{lem}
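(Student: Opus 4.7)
The plan is to compress $T$ using the given compressing disk, apply irreducibility of $M$ to the resulting 2-sphere, and split into cases according to which side of that sphere the 3-ball lies on.

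Fix a compressing disk $D$ for $T$, with $\partial D \subset T$ an essential simple closed curve and $\mathrm{int}(D) \cap T = \emptyset$. Surgering $T$ along $D$---removing an annular neighborhood of $\partial D$ in $T$ and gluing in two parallel copies of $D$---produces an embedded 2-sphere $S$ in $M$, which bounds a closed 3-ball $\hat B$ by irreducibility. Consider the regular neighborhood $N := N(T \cup D)$, with $\partial N = T_0 \sqcup S$ where $T_0$ is parallel to $T$ and lies on the side of $T$ opposite $D$; topologically $N$ is $T^2 \times I$ with a 2-handle attached along a copy of $\partial D$. Two cases arise, depending on whether $\hat B$ and $N$ lie on opposite sides of $S$ or on the same side.

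If $\hat B$ and $N$ lie on opposite sides of $S$, then $\hat B \cup N$ is a compact submanifold of $M$ bounded by $T_0$. Constructed as $T^2 \times I$ plus a 2-handle along $\partial D$ plus a 3-handle capping the resulting sphere, a routine handle calculation identifies it as a solid torus, yielding the desired $V$ bounded by the parallel torus $T$. If $\hat B$ and $N$ lie on the same side of $S$, then $N \subset \hat B$ and $T \subset \hat B$. Since $\hat B$ is simply connected, $T$ separates $\hat B$; let $V$ be the closure of the component not meeting $S$, so $\partial V = T$ and $V \subset \hat B$. A connectedness argument---the side of $T$ opposite $D$ cannot cross $S$, which lies on the $D$-side---shows $V$ is in fact the full compact component of $M$ bounded by $T$.

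If $V$ is a solid torus, we are done; otherwise, set $B := \hat B$. The complement $V^{(1)} := \hat B \setminus \mathrm{int}(V)$ has boundary $T \cup S$ and contains $D$ as a properly embedded compressing disk for $T$. Cutting $V^{(1)}$ along $D$ yields an irreducible 3-manifold bounded by two spheres, which must be $S^2 \times I$ (this also forces $D$ to be non-separating in $V^{(1)}$, since a disconnected cut would give a ball inconsistent with $\partial V^{(1)} = T \cup S$). Hence $V^{(1)}$ is diffeomorphic to a solid torus with an open ball removed from its interior, and $\partial D$ is the meridian of this solid-torus structure. Let $\lambda$ be a longitude of it (a simple closed curve on $T$ intersecting $\partial D$ transversely once), and glue a solid torus $V'$ to $T$ by sending the meridian of $V'$ to $\lambda$. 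A van Kampen computation gives $\pi_1(V^{(1)} \cup V') = 1$, and since the manifold is irreducible, it is a 3-ball. Because both $\hat B$ and $V^{(1)} \cup V'$ are balls glued to $M \setminus \mathrm{int}(\hat B)$ along $S$ (and any diffeomorphism of $S^2$ extends over a ball), replacing $V$ by $V'$ leaves the diffeomorphism type of $M$ unchanged.

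The main obstacle is the second case: pinning down the topology of $V^{(1)}$ as a solid torus with a removed ball (via the irreducibility of the cut manifold) and then selecting the correct meridian $\lambda$---complementary to $\partial D$ rather than parallel to it---so that $V^{(1)} \cup V'$ can be verified to be a 3-ball by a combined $\pi_1$ and irreducibility argument. Everything else, in particular the Case~1 handle calculation and the extraction of $V \subset \hat B$ in Case~2, is routine once the setup is in place.
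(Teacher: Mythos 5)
The paper itself contains no proof of this lemma: it is quoted verbatim from \cite[Lemma A.3.1]{b3mp:book}, so there is no in-paper argument to compare against. Your proof follows what is essentially the standard argument behind that reference: compress $T$ along $D$, cap the resulting sphere $S$ by a ball $\hat B$ using irreducibility of $M$, and split into two cases according to which side of $S$ the ball $\hat B$ lies on. Your Case 1 (the handle computation giving a solid torus), the extraction of the compact piece $V\subset\hat B$ in Case 2, and the final regluing step (any diffeomorphism of $S^2$ extends over the ball) are all correct.

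Two steps in Case 2, however, misuse the word ``irreducible'', and one of them is a genuine logical gap. First, the manifold $W$ obtained by cutting $V^{(1)}$ along $D$ has two sphere boundary components, so it is never irreducible in the standard sense (push one boundary sphere slightly inward: the parallel sphere bounds a ball on neither side). What you actually need here is Alexander's theorem inside $\hat B$: the sphere $S''$ created by the cut bounds a ball $B''\subset\hat B$ on the side away from $S$, and $W=\hat B\setminus \Int B''$ is a ball minus an open sub-ball, hence $S^2\times I$ by uniqueness of embedded balls up to isotopy. Second, and more seriously, the assertion that $V^{(1)}\cup V'$ ``is irreducible'' is unjustified: this manifold is not a submanifold of $M$ --- it is precisely the new object you are constructing --- so irreducibility of $M$ gives you nothing about it, and without it the implication ``compact, simply connected, sphere boundary $\Longrightarrow$ ball'' requires the Poincar\'e conjecture. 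As written, the argument is circular (irreducibility of $V^{(1)}\cup V'$ would only be known after identifying it as a ball). Fortunately the gap evaporates using what you have already established: since $V^{(1)}$ is a solid torus minus an open ball with meridian $\partial D$, gluing $V'$ so that its meridian goes to the longitude $\lambda$ produces the genus-one Heegaard splitting of $S^3$ with an open ball removed, i.e.\ a 3-ball, with no van Kampen or irreducibility argument needed. With these two repairs your proof is complete and matches the standard one.
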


\begin{proof}[Proof of Proposition~\ref{prop:incomp}]
Let $M$ be an open, irreducible graph 3-manifold which does not admit any exhaustion by solid tori. In particular, $M$ is neither diffeomorphic to $\rr^3$ nor to $S^1\times\rr^2$. We start with an arbitrary graph structure $(\calT,\calS)$ on $M$, and denote by $\calT'$ the subset of $\calT$ consisting of compressible tori.

To every $T\in\calT'$ we associate the compact submanifold $V(T)$ given by Lemma~\ref{lem:comp torus}. (It is unique since $M$ is noncompact.) For every $T_1,T_2\in\calT'$ we have $V(T_1)\subset V(T_2)$, $V(T_2)\subset V(T_1)$ or $V(T_1)\cap V(T_2)=\emptyset$.  The set $\calV=\{V(T) \mid T\in\calT'\}$ is partially ordered by inclusion.

\begin{lem}\label{lem:maximal}
Every element of $\calV$ is contained in a maximal element.
\end{lem}

\begin{proof}
Assuming the contrary, we get an infinite sequence $T_1,\ldots,T_k,\ldots$ of members of $\calT'$ such that the sequence $(V(T_k))_{k\ge 1}$ is increasing. Since $\calT'$ is locally finite, this is an exhaustion of $M$. As $M$ does not admit any exhaustion by solid tori, infinitely many $V(T_k)$'s must fail to be solid tori. Then by Lemma~\ref{lem:comp torus}, $M$ has an exhaustion by 3-balls. This implies that $M$ is diffeomorphic to $\rr^3$, which contradicts our hypothesis.
\end{proof}

Having proved Lemma~\ref{lem:maximal}, we continue the proof of Proposition~\ref{prop:incomp}. We remove from $\cal T$ every member $T$ of $\calT'$ such that $V(T)$ is not maximal, getting a subcollection $\calT_1$ of $\calT$ such that every piece of $M\setminus \calT_1$ is a piece of $M\setminus \calT$,  a solid torus, or contained in a 3-ball. In the latter case, using Lemma~\ref{lem:comp torus} we replace this piece by a solid torus. Then there exists $\calS_1$ such that $(\calT_1,\calS_1)$ is a graph structure on $M$.

\begin{lem}\label{lem:adjacent thick}
Let $X$ be a piece of $M\setminus \calT_1$ which is also a piece of $M\setminus \calT$ and is adjacent to at least one solid torus $V(T)$. Then $X$ is thick.
\end{lem}

\begin{proof}
By hypothesis $X$ has nonempty boundary; since $M$ is open, $X$ is noncompact or has at least two boundary components. Therefore, if $X$ is thin, then $X$ is diffeomorphic to $T^2\times I$ or $T^2\times [0,+\infty)$. In the first case, $\bord X$ is a union of two tori $T_1,T_2$, with $T_1$ bounding a solid tori $V(T_1)$ whose interior is disjoint from $X$. Then $X\cup V(T_1)$ is a solid torus, contradicting the maximality of $V(T_1)$. In the second case, $M$ is diffeomorphic to $S^1\times \rr^2$, giving again a contradiction.
\end{proof}

Let $X$ be a piece of $M\setminus \calT_1$ satisfying the hypothesis of Lemma~\ref{lem:adjacent thick}. By Corollary~\ref{corol:unique}, $X$ has a unique Seifert fibration up to isotopy, which we denote by $p:X\to\oo$. Let $T\in\calT'$ be a boundary component of $X$. By Proposition~\ref{prop:extend}, either $p$ extends to $V(T)$, or its generic fiber bounds a meridian disk of $V(T)$. Let $Y$ be the union of $X$ and all the $V(T)$'s where $T$ is as above and the Seifert fibration of $X$ extends to $V(T)$. Then $Y$ is Seifert fibered. We remove from $\calT'$ all those tori and get a graph structure with $Y$ a piece. If $Y$ is still adjacent to a solid torus, then arguing as above, we see that $Y$ is thick.

We perform this modification on all pieces $X$ that are adjacent to at least one solid torus. We obtain a graph structure $(\calT_0,\calS_0)$ with the property that for every piece $Y$ of this graph structure, if there is a solid torus $V$ adjacent to $Y$ then $Y$ is thick, and the fiber of the Seifert fibration on $Y$ bounds a meridian disk on each such solid torus.

In order to complete the proof of Proposition~\ref{prop:incomp}, we argue by contradiction and assume that there remains at least one solid torus $V$ among the pieces of $(\calT_0,\calS_0)$. We let $Y$ be the piece which is adjacent to $V$ and $p:Y\to\oo$ be its Seifert fibration. We denote by $T$ the boundary of $V$.

A properly embedded arc $\alpha\subset\oo$ is called \bydef{admissible} if it has both endpoints in $p(T)$. To any admissible arc $\alpha$ we associate an embedded 2-sphere $S(\alpha)$ obtained by capping the annulus $p^{-1}(\alpha)$ by two disjoint meridian disks of $V$.

If $|\oo|$ is not planar, then we can find a nonseparating admissible arc $\alpha\subset\oo$. The 2-sphere $S(\alpha)$ is nonseparating, which contradicts the irreducibility of $M$. Hence $|\oo|$ is planar.

If there is another solid torus $V'$ adjacent to $Y$, then we let $\alpha$ be a properly embedded arc in $|\oo|$ with one endpoint in $p(T)$ and the other in $p(\bord V')$. Again we can glue two disks to the annulus $p^{-1}(\alpha)$, getting a nonseparating 2-sphere. Hence every boundary component of $Y$ other than $T$ (if any) is incompressible.

Suppose that $\oo$ has more than one end, and let $l$ be a properly embedded line connecting two different ends. Then there is an admissible arc $\alpha$  intersecting $l$ transversely in one point. The 2-sphere $S(\alpha)$ separates two ends of $M$, again a contradiction. Hence $\oo$ has at most one end. We are left with two cases.

\paragraph{Case 1} $\oo$  is compact.

Then $\bord Y$ must have at least one incompressible boundary component. If there are two such components, say $T_1,T_2$, then let $\alpha,\beta$ be properly embeded arcs in $|\oo|$ such that $\alpha$ is admissible, $\beta$ has an endpoint in $p(T_1)$ and one in $p(T_2)$, and $\alpha,\beta$ intersect transversely in one point. Since $M$ is irreducible, $S(\alpha)$ bounds a 3-ball $B$. From the existence of the arc $\beta$ we deduce that $T_1,T_2$ belong to different components of $M\setminus S$. Hence one of them is contained in $B$, which is impossible since they are both incompressible. Therefore, $|\oo|$ is a compact planar surface with exactly two boundary components, i.e.~an annulus. We denote by $T'$ the incompressible boundary component of $Y$.

Since $Y$ is thick, $\oo$ must have at least one cone point, say $x$. Let $f$ be the singular fiber above $x$. Let $\alpha,\beta$ be arcs in $|\oo|$ such that $\alpha$ is admissible, $\beta$ connects $x$ to $p(T')$, and $\alpha,\beta$ intersect transversely in one point. Arguing as above, we see that the 2-sphere $S(\alpha)$  separates $f$ from $T'$. Since $T'$ is incompressible, the 3-ball $B$ bounded by $S(\alpha)$ must contain $f$. This implies that $f$ is null-homotopic in the manifold $Z:=Y\cup V$. Computing $\pi_1Z$ using the van Kampen theorem and noting that only the regular fiber, which is a proper power of $f$, gets killed by adding $V$, we reach a contradiction.

\paragraph{Case 2} $\oo$ is one-ended.

This case is analogous to Case 1. First we prove that $\bord Y=T$ using a line connecting the image by $p$ of the putative incompressible component of $\bord Y$ which plays the role of $\beta$ in the compact case. Thus $|\oo|$ is diffeomorphic to $S^1\times [0,+\infty)$. We then prove that $\oo$ has no cone point, which implies that $M$ is diffeomorphic to $S^1\times\rr^2$. This contradiction finishes the proof of Proposition~\ref{prop:incomp}.
\end{proof}

\section{End of the proof of Theorem~\ref{theo:main}}\label{sec:end}
Let $M$ be an open, irreducible $3$-manifold endowed with a graph structure $(\calT_0,\calS_0)$ such that every member of $\calT_0$ is incompressible. If $M$ is Seifert fibered, there is nothing to prove, so we assume it is not. Thus every piece is either thick or diffeomorphic to $T^2\times I$, $K^2\tilde\times I$, or $T^2\times [0,+\infty)$.

Our first task is to get rid of the pieces diffeomorphic to $T^2\times I$. With this goal in mind, we let $G$ be the dual graph of $\calT_0$, and $H$ be the subgraph consisting of the vertices corresponding to pieces diffeomorphic to $T^2\times I$, together with the edges that connect them. If $v$ is a vertex of $H$ we denote by $X_v$ the corresponding piece.

Let $J$ be a connected component of $H$, and let $X$ be the union of the pieces corresponding to $J$. If $J$ is finite, then $X$ is diffeomorphic to $T^2\times I$ and adjacent to either one or two pieces. In the former case, we remove $X$ and glue the adjacent piece to itself. In the latter, we remove $X$ and glue the two adjacent pieces together.

If $J$ is infinite, then it is a half-line. Indeed, if it were a line, then $M$ would be diffeomorphic to $T^2\times \rr$, hence Seifert fibered. As a consequence, $X$ is diffeomorphic to $T^2\times [0,\infty)$ and adjacent to a unique piece $Y$. The Seifert fibration on $Y$ induced by $\calS_0$ extends to $Y\cup X$, so we may merge $Y$ with $X$.

Performing this modification on each connected component of $H$, we obtain a graph structure $(\calT_1,\calS_1)$ on $M$ such that every member of $\calT_1$ is incompressible, and no piece of $(\calT_1,\calS_1)$ is diffeomorphic to $T^2\times I$.

If there are pieces diffeomorphic to $T^2\times [0,+\infty)$, we can merge them with the adjacent pieces as explained above. This yields a graph structure $(\calT_2,\calS_2)$ with all the properties of $(\calT_1,\calS_1)$, and in addition having no piece diffeomorphic to $T^2\times [0,+\infty)$. At this stage, all pieces are either thick or diffeomorphic to $K^2\tilde\times I$.

We need to choose a Seifert fibration on each piece diffeomorphic to $K^2\tilde\times I$. To this end, we note that for every such piece $X$ there is a unique piece $Y$ adjacent to $X$, and $Y$ is thick (otherwise $M$ would be closed.) If the (unique up to isotopy) Seifert fibration on $Y$ can be extended to $X$, we merge $Y$ and $X$, getting a thick piece $Y'$. Otherwise we fix an arbitrary Seifert fibration on $X$. Let $(\calT_3,\calS_3)$ be the result of this construction.

Let $T$ be a member of $\calT_3$ that is not adjacent to a thin piece. Let $T',T''$ be the two boundary components  of a tubular neighborhood $U$ of $T$. If the circle fibrations on $T',T''$ induced by $\calS_3$ match, then we remove $T$ and adjust $\calS_3$ on $U$. Note that there are two cases: $T',T''$ may belong to a single piece of $(\calT_3,\calS_3)$ or two different pieces. In the latter case, those two pieces are merged. By Lemma~\ref{lem:thick}, the new pieces produced by this construction are thick, so we can iterate it. This process (which may be finite or infinite) yields a reduced graph structure $(\calT_4,\calS_4)$. Thus the proof of Theorem~\ref{theo:main} is complete.

\section{Uniqueness}\label{sec:uniqueness}
We turn to the proof of Theorem~\ref{theo:uniqueness}.

We first show that it suffices to prove the following claim:
\begin{claim}
Every member $T$ of $\calT_1$ is isotopic to some member $T'$ of $\calT_2$.
\end{claim}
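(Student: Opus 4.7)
The plan is a standard JSJ-type transversality argument. First I would put $T$ in general position with respect to $\bigcup\calT_2$ and isotope it so as to minimize the number of components of $T\cap\bigcup\calT_2$. Since every torus in $\calT_1\cup\calT_2$ is incompressible and $M$ is irreducible, in the minimal configuration no intersection circle can be innermost inessential on any of the tori involved: such a circle would bound disks on both sides cobounding a 2-sphere, which by irreducibility bounds a 3-ball used to eliminate the circle from the intersection. Hence every circle of $T\cap\bigcup\calT_2$ is essential on both $T$ and on the $\calT_2$-torus containing it; in particular, on any $T'\in\calT_2$ meeting $T$, the circles of $T\cap T'$ are mutually parallel.

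I would then treat the case $T\cap\bigcup\calT_2=\emptyset$ first. Then $T$ lies inside a single piece $Y$ of $M\setminus\calT_2$. If $Y\cong K^2\tilde{\times} I$ (the only thin possibility in a reduced graph structure, by Section~\ref{sec:end}), the only incompressible torus in $Y$ up to isotopy is its boundary, so $T$ is isotopic to some $T'\in\calT_2$ and we are done. If $Y$ is thick, Lemma~\ref{lem:vert} isotopes $T$ in $Y$ to a fibered torus $T_0$ for the (isotopically unique, by Corollary~\ref{corol:unique}) Seifert fibration of $Y$. The remaining task is to show that $T_0$ is parallel to a component of $\bord Y$. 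Assuming the contrary, let $Z_1,Z_2$ be the pieces of $(\calT_1,\calS_1)$ containing small collars of the two sides of $T$. Reducedness of $(\calT_1,\calS_1)$ forbids the circle fibrations on $T$ induced by $\calS_1|_{Z_1}$ and $\calS_1|_{Z_2}$ from matching. On the other hand, $T_0$ has a thick saturated collar on each side inside $Y$ that is contained in $Z_i$, and Corollary~\ref{corol:unique} applied inside $Z_i$ forces the restriction of the Seifert fibration of $Y$ to that collar to be isotopic to the restriction of $\calS_1|_{Z_i}$. Hence the two $\calS_1$-fibrations agree on $T_0$, a contradiction.

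In the remaining case $T\cap\bigcup\calT_2\neq\emptyset$, I would derive a contradiction with minimality. Each component of $T\setminus\bigcup\calT_2$ is an annulus $A\subset Y$ for some piece $Y$, with $\bord A\subset T'\in\calT_2$. Being a sub-surface of the incompressible torus $T$ with boundary on the incompressible $T'$, $A$ is essential in $Y$; by Lemma~\ref{lem:ann vert} (invoking part~(ii) and re-choosing the Seifert fibration of $Y$ if $Y\cong K^2\tilde{\times} I$) $A$ is isotopic in $Y$ to a vertical annulus. Since $T'$ is saturated in $Y$, an adjacent annular region $A''\subset T'$ with $\bord A''=\bord A$ is also vertical, so $A\cup A''$ is a saturated torus in $Y$; a standard analysis in the Seifert fibration then produces a region cobounded by $A$ and $A''$ across which $T$ can be pushed through $T'$, strictly decreasing $|T\cap\bigcup\calT_2|$ and contradicting minimality.

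The hard part is the non-boundary-parallel subcase in the second paragraph. Matching $\calS_1$-fibrations across $T_0$ via Corollary~\ref{corol:unique} requires producing a thick saturated collar of $T_0$ inside each $Z_i$, which is not automatic when $Z_i$ itself happens to be thin (diffeomorphic to $K^2\tilde{\times} I$). In that situation the appeal to Corollary~\ref{corol:unique} must be replaced by Lemma~\ref{lem:ann vert}(ii), used to select a Seifert fibration of $Z_i$ aligning with that of $Y$; one must then carefully verify that the resulting fibrations glue across $T_0$ to a genuine Seifert fibration on $Z_1\cup Z_2$, so as to contradict the reducedness of $(\calT_1,\calS_1)$.
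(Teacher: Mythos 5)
Your overall strategy --- transversality, elimination of inessential intersection circles via irreducibility, then verticality of the remaining pieces plus reducedness --- is the paper's strategy, but two steps as written do not go through. In the case $T\cap\bigcup\calT_2\neq\emptyset$, the assertion that a component $A$ of $T\setminus\bigcup\calT_2$ is essential \emph{because} $T$ and $T'$ are incompressible is false: incompressibility of $A$ follows, but $A$ may well be boundary-parallel (this is exactly what happens when $T$ is isotopic to $T'$ but meets it in two circles); what excludes boundary-parallel components is your minimality assumption. More seriously, once $A$ is essential and vertical there is no ``region cobounded by $A$ and $A''$ across which $T$ can be pushed through $T'$'' --- such a region exists precisely in the boundary-parallel case you have just excluded, so the claimed contradiction with minimality evaporates. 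The correct contradiction here is with reducedness: since every component of $T\cap Y$ is vertical in every piece $Y$ of $M\setminus\calT_2$ it meets, each circle of $T\cap T'$ is isotopic on $T'$ to a fiber of the pieces on both sides of $T'$, so the fibrations of $\calS_2$ match across $T'$ and $T'$ could be deleted from $\calT_2$. (The paper runs the dual argument in its Step 3: it looks at the trace of $\calT_2$ on the pieces of $M\setminus\calT_1$ adjacent to $T$, applies Lemma~\ref{lem:ann vert}, and contradicts reducedness of $\calT_1$.)

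In the disjoint case with $Y$ thick and $T_0$ not boundary-parallel, the ``thick saturated collar of $T_0$ inside each $Z_i$'' need not exist, since the piece of $M\setminus(\calT_1\cup\calT_2)$ adjacent to $T_0$ on the $Z_i$ side may itself be thin; you flag this as the hard part but do not resolve it, and Lemma~\ref{lem:ann vert}(ii) alone will not produce the matching fibration on $Z_1\cup Z_2$. The paper's tool is Lemma~\ref{lem:thick} applied to the whole piece $X_i$ of $M\setminus(\calT_1\cup\calT_2)$ rather than to a collar: when $X_i$ is thick it yields that $Z_i$ is thick and that the fibration induced on $X_i$ by $Y$ extends to $Z_i$, so the $\calS_1$-fibrations of $Z_1$ and $Z_2$ match along $T$, contradicting reducedness of $\calT_1$; the thin possibilities for $X_i$ are then handled separately ($T^2\times I$ is excluded because it would make $\calT_1$ non-reduced or $T$ isotopic to a member of $\calT_2$, and $K^2\tilde\times I$ forces $\bord X_i=T$, after which one extends the fibration from the other side to reach the same contradiction). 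This case analysis is what your argument is missing.
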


Indeed, if the claim is true, then for each $T$ the corresponding $T'$ is necessarily unique (otherwise there would be two distinct members of $\calT_2$ isotopic to each other, which is impossible since $\calT_2$ is reduced). By symmetry the corresponding statement with the roles of $\calT_1$ and $\calT_2$ reversed also holds. Thus there is a bijective correspondence between members of $\calT_1$ and members of $\calT_2$ which to every member $T$ of $\calT_1$ associates a member of $\calT_2$ isotopic to it. It follows that there is an ambient isotopy carrying $\calT_1$ to $\calT_2$.

Let us prove the claim. We will freely modify $\calT_1$ and $\calT_2$ by isotopies in order to achieve various properties of their intersection. At each step, we may have to perform infinitely many isotopies, but they have disjoint supports, and each compact set of $M$ is concerned by only finitely many of these isotopies.

\paragraph{Step 1} By a standard general position argument, we can ensure that after some isotopies (as explained in the previous paragraph) all members of $\calT_1$ and $\calT_2$ intersect transversely along (locally finitely many) simple closed curves.

\smallskip

Let $T_1\in\calT_1$ and $T_2\in \calT_2$ and let $\gamma$ be a curve in $T_1\cap T_2$. Since both $T_1$ and $T_2$ are incompressible, the curve $\gamma$ is essential in $T_1$ if and only if it is essential in $T_2$. Thus we may refer to intersection curves as simply `essential' or `inessential'.

\paragraph{Step 2}  After some isotopies we can ensure that there are no inessential intersection curves.

\smallskip

Let $\gamma$ be an inessential curve in $T_1\cap T_2$ with $T_1\in\calT_1$ and $T_2\in \calT_2$ which is innermost in $T_2$. Then there exist disks $D_1\subset T_1$ and $D_2\subset T_2$ such that $\bord D_1=\bord D_2=\gamma$ and $\Int D_2 \cap \bigcup \calT_1=\emptyset$.

Since $M$ is irreducible, the sphere $D_1\cup D_2$ bounds a 3-ball. Hence there is an isotopy of $T_1$ which allows us to get rid of $\gamma$.

\paragraph{Step 3} After some isotopies we can ensure that $\bigcup \calT_1$ and $\bigcup \calT_2$ are disjoint.

\smallskip 

Let $T\in\calT_1$ be a torus which is not disjoint from $\bigcup\calT_2$ and let $\gamma\subset T$ be an intersection curve. We distinguish two cases according to whether $T$ is adjacent to one or two pieces of $M\setminus \calT_1$.

First assume that $T$ is adjacent to two pieces $X_1,X_2$ of $M\setminus \calT_1$. We let $T_1$ (resp.~$T_2$) be the boundary component of $X_1$ (resp.~$X_2$) coming from cutting $M$ along $T$, and we let $\gamma_1$ (resp.~$\gamma_2$) be a curve parallel to $\gamma$ in $T_1$ (resp.~$T_2$). Looking at the trace of $\calT_2$ on $X_1$ and $X_2$, we get for each $i\in\{1,2\}$ a properly embedded annulus $A_i\subset X_i$ one of whose boundary components is $\gamma_i$. Since $\gamma$ is an essential curve on $T$, the annuli $A_i$ are incompressible.  If both $A_1$ and $A_2$ are essential, then by Lemma~\ref{lem:ann vert} we can choose Seifert fibrations on $X_1$ and $X_2$ such that each $A_i$ is fibered. This implies that these Seifert fibrations match along $T$, which is impossible since $\calT_1$ is reduced. Hence at least one $A_i$ is boundary parallel, and by an innermost argument we can reduce the number of intersection curves on $T$.

If $T$ is adjacent to only one piece $X$ of $M\setminus \calT_1$ the argument is similar. Let $T_+$ and $T_-$ be the two boundary components of $X$ coming from cutting $M$ along $T$, and let $\gamma_+\subset T_+$ and $\gamma_-\subset T_-$ be curves parallel to $\gamma$. Since $X$ has at least two boundary components, it cannot be diffeomorphic to $K^2\tilde{\times} I$, so it is thick. Thus looking at the trace of $\calT_2$ on $X$ yields either two incompressible properly embedded annuli $A_+,A_-\subset X$ one of whose boundary component is $\gamma_+$ and $\gamma_-$ respectively, or one incompressible properly embedded annulus $A\subset X$ whose boundary is $\gamma^+\cup \gamma^-$. Again using Lemma~~\ref{lem:ann vert} we see that the latter case is excluded, and in the former case at least one of $A_+$ and $A_-$ is inessential, allowing to reduce the number of intersection curves on $T$.

\paragraph{Step 4} After some isotopies, all members of $\calT_1$ (resp.~$\calT_2)$ are fibered with respect to $\calS_2$ (resp.~$\calS_1$).

\smallskip 

This follows from repeated applications of Lemma~\ref{lem:vert}, first to members of $\calT_1$, then to members of $\calT_2$.

\paragraph{Step 5} End of proof of the claim.

\smallskip

Arguing by contradiction, let $T\in\calT_1$ which is not isotopic to any member of $\calT_2$.

\paragraph{Case 1} $T$ is adjacent to two pieces $X_1,X_2$ of $M\setminus (\calT_1\cup\calT_2)$. 

First we observe that neither $X_1$ nor $X_2$ can be diffeomorphic to $T^2\times I$ (otherwise either $\calT_1$ would not reduced, or $T$ would be isotopic to some member of $\calT_2$.)

Let $Y$ be the piece of $M\setminus \calT_2$ containing $X_1$ and $X_2$. If $Y$ were diffeomorphic to $K^2\tilde{\times} I$, then $T$ would be isotopic to $\bord Y$, contradicting our hypothesis. Hence $Y$ is thick. Let $p$ be the Seifert fibration on $Y$ coming from $\calS_2$. By assumption, $T$ is fibered with respect to $p$. For each $i\in\{1,2\}$, let $p_i$ be the restriction of $p$ to $X_i$.

Suppose that both $X_1$ and $X_2$ are thick. We now have two subcases. Assume first that $T$ is adjacent to two pieces of $M\setminus \calT_1$, say $Z_1,Z_2$ with $X_1\subset Y\cap Z_1$ and $X_2\subset Y\cap Z_2$. By Lemma~\ref{lem:thick} applied twice, for each $i$ we deduce that $Z_i$ is thick, and carries a Seifert fibration whose restriction to $X_i$ is $p_i$. This contradicts the hypothesis that $\calT_1$ is reduced. If by contrast $T$ is adjacent to only one piece $Z$ of $M\setminus \calT_1$, then $X_1\cup X_2\subset Z$; by Lemma~\ref{lem:thick}, $Z$ is thick and carries a Seifert fibration whose restriction to each $X_i$ is $p_i$. Again this contradicts the hypothesis that $\calT_1$ is reduced. 

Suppose now that one of $X_1$ and $X_2$, say $X_1$, is diffeomorphic to $K^2\tilde{\times} I$. Since $M$ is open, $X_2$ is thick. Then $\bord X_1$ is connected, so it is equal to $T$, and there are two components of $M\setminus \calT_1$ adjacent to $T$: one is $X_1$, and the other one, which we call $Z_2$, contains $X_2$, hence is thick. Arguing as above we show that some Seifert fibration on $Z_2$ can be extended over $X_1$, a contradiction.

\paragraph{Case 2}

Assume now that $T$ is adjacent to only one piece $X$ of $M\setminus (\calT_1\cup\calT_2)$. Then $X$ has at least three boundary components, so it is thick. Let $Y$ be the piece of $M\setminus \calT_2$ containing $X$. By Lemma~\ref{lem:thick}, $Y$ is thick, and its unique Seifert fibration restricts to some Seifert fibration $p$ on $X$ which matches itself across $T$. Letting $Z$ be the only piece of $M\setminus \calT_1$ adjacent to $T$ and arguing as above we get a contradiction. This completes the proof of the Claim, hence that of Theorem~\ref{theo:uniqueness}.

\bibliographystyle{abbrv}
\bibliography{graph}

Institut Montpelli\'erain Alexander Grothendieck,
CNRS - Universit\'e de Montpellier.\\ 
\texttt{sylvain.maillot@umontpellier.fr}

\end{document}